\newtheorem{theorem}{Theorem}
\newtheorem{lemma}{Lemma}
\newtheorem{proposition}[lemma]{Proposition}
\newtheorem{remark}[lemma]{Remark}
\numberwithin{lemma}{section}
\numberwithin{equation}{section}
\newcommand{\R}{{\mathbb R}}
\renewcommand{\R}{\mathbb R}
\begin{document}

\title{Global bounds for the cubic nonlinear Schr\"odinger equation (NLS) in one space dimension}

\author{Mihaela Ifrim}
\address{Department of Mathematics, University of California at Berkeley}
\thanks{The first author was supported by the Simons Foundation}
\email{ifrim@math.berkeley.edu}

\author{ Daniel Tataru}
\address{Department of Mathematics, University of California at Berkeley}
 \thanks{The second author was partially supported by the NSF grant DMS-1266182
as well as by the Simons Foundation}
\email{tataru@math.berkeley.edu}

\begin{abstract}

  This article is concerned with the small data problem for the cubic
  nonlinear Schr\"odinger equation (NLS) in one space dimension, and
  short range modifications of it. We provide a new, simpler approach in order
  to prove that global solutions exist for data which is small in
  $H^{0,1}$.  In the same setting we also discuss the related problems
of obtaining a modified scattering expansion for the solution, as well as 
asymptotic completeness.

\end{abstract}

\maketitle

%%%%%%%%%%%%%%%%%%%%%%%%%%%%%%%%%%%%%%%%%%%%%%%%%%%%%%%%%%%%%%%%%%%%%%%%%%%%%%%%%%%%%%%%%%%%%%%%%%%%%%%%%%%%%%%%%%%%%%%%%
%%%%%%%%%%%%%%%%%%%%%%%%%%%%%%%%%%%%%%%%%%%%%%%%%%%%%%%%%%%%%%%%%%%%%%%%%%%%%%%%%%%%%%%%%%%%%%%%%%%%%%%%%%%%%%%%%%%%%%%%%
\section{Introduction}
We consider the cubic nonlinear Schr\"odinger equation (NLS)  problem 
in one space dimension
\begin{equation}\label{nls}
i u_t + \frac{1}{2}u_{xx} = \lambda u |u|^2, \qquad u(0) = u_0, 
\end{equation}
where $u$ is a complex valued function, $u:\mathbb{R}\times
\mathbb{R}\rightarrow \mathbb{C}$, and $\lambda= 1$ or $-1$
corresponding to the defocusing, respectively the focusing case.

Our results and proofs apply  equally to short range modifications of it
\begin{equation}\label{nls-pert}
i u_t +\frac{1}{2} u_{xx} = \lambda u |u|^2 + uF(|u|^2), \qquad u(0) = u_0 ,
\end{equation}
where $F$ satisfies
\begin{equation*}
%\label{F-def}
|F(r)| \lesssim |r|^{1+\delta}, \qquad |F'(r)| \lesssim |r|^\delta, \qquad \delta > 0.
\end{equation*}
A common feature of these two equations is that they exhibit  Galilean invariance
as well as the phase rotation symmetry, both of which are used in our arguments.

The question at hand is that of establishing global existence and asymptotics 
for solutions to \eqref{nls} and then \eqref{nls-pert}, provided that the initial data 
is small and spatially localized. Traditionally this is done 
in Sobolev spaces of the form $H^{m,k}$, whose norms are defined by
\begin{equation*}
\Vert u\Vert _{H^{m,k}}^2:=
   \Vert  (1-\partial ^2_{x})^{\frac{m}{2}}u\Vert _{L^2}^2+\Vert  (1+\vert x\vert ^2)^{\frac{k}{2}}   u\Vert _{L^2}^2  ,  \qquad m,k \geq 0.
\end{equation*}

The problem \eqref{nls} is completely integrable, which allows one to
use very precise techniques, i.e., the inverse scattering method, to
obtain accurate long range asymptotics, even for large data in the
defocusing case.  These have the form
\[
u(t,x) \approx  t^{-\frac12} e^{\frac{ix^2}{2t} + \lambda |W(x/t)|^2 \log t}  W(x/t).
\]  
One notes that this is not linear scattering, but rather a modified
linear scattering.  Indeed, in work of Deift and Zhou \cite{DZ}, the
inverse scattering method is used to show that the above asymptotics
hold for data in $H^{1,1}$, not only for
\eqref{nls}, but also for \eqref{nls-pert} with a restricted range of
powers.

In the meantime, two alternate approaches have emerged, which do not depend on the 
complete integrability of the problem. The first, initiated by Hayashi and Naumkin \cite{HN},
and refined by Kato-Pusateri~\cite{KP}, is based on deriving an asymptotic equation 
for the Fourier transform of the solutions,
\[
\frac{d}{dt} \hat u(t,\xi) = \lambda  it^{-1}\, \hat u(t,\xi) | \hat{u}(t,\xi)|^2 + O_{L^{\infty}}(t^{-1-\epsilon}).
\]
This gives a result for data in $H^{1,1}$. 

A second approach, introduced by Lindblad-Soffer~\cite{LS}, is based on
deriving an asymptotic equation in the physical space along rays
\[
(t \partial_t + x \partial_x)   u(t,x) = \lambda i t^{-1}  u(t,x) | u(t,x)|^2 + O_{L^{\infty}}(t^{-1-\epsilon}).
\]
This argument yields a similar result, though the precise regularity at which this works is not 
computed, and is likely higher.

The method in the present paper in some sense interpolates between the two ideas above.
Instead of localizing sharply on either the Fourier or the physical side, we use a mixed 
wave packet style phase space localization, loosely inspired from the analysis in \cite{T-global}. 
The idea is that using wave packets one can produce a more accurate approximate
solution to the linear Schr\"odinger equation, and use that to test for the long range behavior
in the nonlinear equation.

Our interest in this problem arose from working on two dimensional
water waves, where a similar situation occurs.  There, a global result
was independently proved by Ionescu-Pusateri \cite{ip} and Alazard-Delort \cite{ad} using
methods based on the two ideas above. However, implementing either of
these strategies brings on considerable difficulties. Many of these
difficulties are bypassed by the authors in \cite{IT}, where a simpler
proof of the global result is given.

The present paper contains the implementation of the ideas in \cite{IT}
for the simpler problems \eqref{nls}, \eqref{nls-pert}. Our goal is
two-fold, namely to provide a simpler proof of the global result with
fewer assumptions, and also to give a more transparent introduction to
the work in \cite{IT}. Our main result is Galilean invariant:
  \begin{theorem}
    a) (Global well-posedness and decay) Consider either the equation
    \eqref{nls} or \eqref{nls-pert}, with initial data $u_0$ which is
    small in $H^{0,1}$,
\begin{equation} \label{main}
\|u_0\|_{ H^{0,1}} \leq \epsilon \ll 1.
\end{equation}
Then there exists a unique global solution
$u$ with regularity $e^{-\frac{it}{2} \partial_x^2} u\in C(\mathbb{R};H^{0,1}(\mathbb{R}))$
which satisfies the pointwise estimate
\begin{equation}\label{point}
\| u\|_{L^\infty} \lesssim \epsilon |t|^{-\frac12},
\end{equation}
as well as the energy bound
\begin{equation}\label{energy}
\| e^{\frac{-it}{2} \partial_x^2} u\|_{H^{0,1}} \lesssim \epsilon (1+t)^{C\epsilon ^2}.
\end{equation}

b) (Asymptotic behavior) Let $u$ be a solution to either \eqref{nls} or \eqref{nls-pert}
as in part (a). Then there exists a  function $W \in H^{1-C\epsilon^2}(\mathbb{R})$ such that 
\begin{equation}  
\label{asy}
\begin{split}
& u(x, t) = \frac{1} {\sqrt{t}} e^{\frac{ix^2}{2t}} 
 W(x/t) e^{i\log t\vert W(x/t) \vert ^2}+ err_{x}, 
 \end{split}
\end{equation}
\begin{equation}  
\label{asy1}
\begin{split}
& \hat{u}(\xi, t) =  e^{-\frac{it\xi^2}{2}} W(\xi) e^{i\log t\vert W(\xi) \vert ^2} + err_\xi ,
\end{split}
\end{equation}
where 
\begin{equation*}  
%\label{errorx}
\begin{split}
& err_x \in \epsilon \left( O_{L^{\infty}}((1+t)^{-\frac{3}{4}+C\epsilon^2})
\cap O_{L^{2}_{x}}((1+t)^{-1+C\epsilon^2})\right), 
\end{split}
\end{equation*}
\begin{equation*}  
%\label{errorxi}
\begin{split}
& err_\xi \in \epsilon\left( O_{L^{\infty}}((1+t)^{-\frac14+C\epsilon^2})\cap O_{L^{2}_{\xi}}((1+t)^{-\frac12+C\epsilon^2})\right).
\end{split}
\end{equation*}

c) (Asymptotic completeness for small data)  Let $C$ be a large
universal constant. For each $W$ satisfying
\begin{equation*}
 \|W\|_{ H^{1+C\epsilon^2}(\mathbb{R})} \ll \epsilon \ll 1
\end{equation*}
there exists $u_0 \in H^{0,1}$ satisfying \eqref{main} so that \eqref{asy} and \eqref{asy1} hold
for the corresponding solution $u$ to  \eqref{nls} or \eqref{nls-pert}.
\end{theorem}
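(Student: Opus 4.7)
The plan is to prove (a) by a continuity argument that simultaneously propagates the pointwise bound \eqref{point} and the energy bound \eqref{energy}. Local well-posedness in $H^{0,1}$ is standard, so the task reduces to closing a bootstrap: assuming both bounds hold on $[0,T]$ with constant $C_1 \epsilon$, reprove them with constant $\epsilon/2$. The two estimates feed into each other: the pointwise bound will close the energy inequality thanks to the cubic nature of the nonlinearity, and the energy bound will feed the pointwise estimate via wave packet testing. For the energy part I would work with the Galilean vector field $L := x + it\partial_x$, which commutes with the linear Schr\"odinger operator and satisfies $\|L u\|_{L^2} = \|e^{-it\partial_x^2/2} u\|_{H^{0,1}}$ up to trivial factors. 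Since $L$ is a first-order weighted derivative and acts on a cubic expression, a Leibniz computation together with the commutation identity $[L, u|u|^2] = 2|u|^2 (Lu) + u^2 \overline{Lu}$ yields
\begin{equation*}
\frac{d}{dt}\|L u\|_{L^2}^2 \lesssim \|u\|_{L^\infty}^2 \|L u\|_{L^2}^2 .
\end{equation*}
Inserting the bootstrap hypothesis on $\|u\|_{L^\infty}$ and applying Gronwall gives \eqref{energy} with the $(1+t)^{C\epsilon^2}$ growth; the plain $L^2$ bound on $u$ comes from mass conservation (for \eqref{nls-pert} one uses the $\delta$-improvement in $F$).

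\textbf{Pointwise estimate via wave packet testing.} The heart of the matter is \eqref{point}. Following the approach outlined in the introduction, I would introduce a one-parameter family of approximate linear wave packets $\Psi_v(t,x)$ of Gaussian type, concentrated in phase space near position $x = vt$ and frequency $\xi = v$, with $x$-width of order $\sqrt{t}$ and modulated by the correct quadratic phase $e^{ivx - itv^2/2}$. Define the testing functional
\begin{equation*}
\gamma(t,v) := \int u(t,x) \overline{\Psi_v(t,x)}\, dx .
\end{equation*}
A first step is to compare $\gamma(t,v)$ with $t^{1/2} u(t,tv)$ (up to an explicit oscillatory factor): this comparison is controlled by $\|Lu\|_{L^2}$ through a Poincar\'e-type inequality on the scale of the wave packet, with an error of size $\epsilon t^{-1/2+C\epsilon^2}$. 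The second step is to differentiate $\gamma$ in time, use that $\Psi_v$ solves the linear Schr\"odinger equation to an acceptable approximation, integrate by parts, and exploit the wave packet concentration to extract from the integral $\int |u|^2 u \overline{\Psi_v}$ the leading ODE
\begin{equation*}
\frac{d}{dt}\gamma(t,v) = \frac{\lambda i}{t}\, \gamma(t,v)\, |\gamma(t,v)|^2 + O_{L^\infty_v}(\epsilon^3 t^{-1-\delta}) .
\end{equation*}
Because the ODE conserves $|\gamma|$ at leading order, $|\gamma(t,v)| \lesssim \epsilon$, which transfers back to \eqref{point} through the first step.

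\textbf{Asymptotic behavior and completeness.} Once this ODE is established, part (b) is extracted by integrating the ODE: a limit $W(v) := \lim_{t\to\infty} \gamma(t,v) \exp(-i\lambda |\gamma(t,v)|^2 \log t)$ exists in $L^\infty$, and the $H^{1-C\epsilon^2}$ regularity of $W$ is obtained by interpolating the $L^2$ control inherited from \eqref{energy} against the $L^\infty$ pointwise decay of $Lu$ that falls out of the same wave packet comparison. Substituting back into the dictionary relating $\gamma$ to $u$ and to $\hat u$ (via the Fourier transform of the wave packet) produces \eqref{asy} and \eqref{asy1}, with the claimed $L^\infty$ and $L^2$ error bounds coming directly from the two kinds of control we have on the comparison $\gamma \leftrightarrow \sqrt{t}\, u$. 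For part (c), I would run the construction in reverse: given $W \in H^{1+C\epsilon^2}$ small, use \eqref{asy1} at a large time $T$ to define approximate data $u_T$, solve \eqref{nls} (or \eqref{nls-pert}) backward to $t = 0$ using part (a), and take $T \to \infty$. The bootstrap estimates give uniform control on $u_T(0)$ in $H^{0,1}$, and a standard compactness and uniqueness argument within the ball identifies the limit as the desired $u_0$.

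\textbf{Main obstacle.} The serious work is hidden in the ODE derivation: one must show that $\int |u|^2 u\, \overline{\Psi_v}$ factors as $t^{-1} \gamma|\gamma|^2$ plus an error that is genuinely $o(t^{-1})$ uniformly in $v$. This requires exploiting the wave packet localization to establish that contributions to the integral from velocities far from $v$ are killed by stationary-phase-type cancellation (using the oscillation in $\Psi_v$), while contributions from the diagonal reproduce the cubic model exactly. The technical balance is delicate because the energy bound only provides a slowly growing $t^{C\epsilon^2}$ factor, which must be overpowered by the nonlinear gain; this is what ultimately fixes the exponent $C\epsilon^2$ appearing in the theorem. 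The other difficulty is propagating enough regularity through the ODE to recover the $H^{1-C\epsilon^2}$ bound on $W$ needed for part (c), which requires commuting a $v$-derivative through the ODE and again relying on the $L^2$ control of $Lu$.
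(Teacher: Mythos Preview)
Your plan for parts (a) and (b) is essentially the paper's own argument: the same bootstrap with $L = x + it\partial_x$, the same wave-packet testing functional $\gamma(t,v)=\int u\,\bar\Psi_v$, the same asymptotic ODE $\dot\gamma = -i\lambda t^{-1}|\gamma|^2\gamma + R$, and the same interpolation to place $W$ in $H^{1-C\epsilon^2}$. One small correction on mechanism: the error control in the ODE does not come from stationary-phase cancellation of off-diagonal velocities; it comes from the physical-space localization of $\Psi_v$ on the $\sqrt t$ scale together with the $L^2$ bound on $Lu$ (which is exactly $\partial_v$ of $e^{-i\phi}u$). No oscillatory-integral estimate is needed, only a convolution/Poincar\'e comparison.

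Part (c) is where you diverge from the paper, and where there is a genuine gap. Your proposal is to prescribe data at time $T$ from the asymptotic profile, solve backward via part (a), and let $T\to\infty$. The obstacle is uniformity: the profile carries the phase $e^{i\log T\,|W|^2}$, so a direct computation gives $\|L u_T(T)\|_{L^2}\sim \epsilon(1+\epsilon^2\log T)$, and evolving backward with the Gronwall/bootstrap of part (a) multiplies this by a further factor $T^{C\epsilon^2}$. The resulting control on $u_T(0)$ in $H^{0,1}$ is not uniform in $T$, so neither compactness nor the Cauchy property of $\{u_T(0)\}$ follows from what you have written. The paper sidesteps this by solving directly from infinity: it builds an explicit approximate solution $u_{app}$ using a time-dependent frequency truncation $\mathcal W(t,\cdot)=W_{<t^{1/2}}$, computes the residual $f=(i\partial_t+\tfrac12\partial_x^2)u_{app}-u_{app}|u_{app}|^2$, and finds the correction $v=u-u_{app}$ by a contraction in weighted Strichartz spaces on $[1,\infty)$, with a separate (and somewhat delicate, $V^2$-based) estimate for $\Phi Lf$. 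The hypothesis $W\in H^{1+C\epsilon^2}$ is precisely what makes $f$ and $Lf$ decay fast enough for the contraction to close; your sketch does not explain how those extra $C\epsilon^2$ derivatives are used.
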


The next section contains the proof of the theorem. We begin with the
proof of part (a), which is a self contained argument.  The argument
for part (b) is based on a more careful analysis of the outcome of
(1a).  Finally, the proof of the asymptotic completeness is again a
self contained argument, which is a simpler lower regularity version
of the original result in \cite{KT}. Several remarks may be of interest:

\begin{remark}
Since one goal of this article is to present a clear and simple 
statement, the result and the proofs are done in the setting of $H^{0,1}$ data.
However, with some extra work, the same method will also work for data in $H^{0,s}$ 
with $\frac12 < s \leq 1$. 
\end{remark}

\begin{remark}
  One may ask whether one does not have $W \in H^1$, with a smooth one
  to one correspondence between $u_0 \in H^{0,1}$ and $W$. The work
  \cite{DZ} of Deift and Zhou shows that this is not the case, and
  that there is necessarily some logarithmic type correction to such a property.
We leave open the question of providing a direct proof of such a correspondence 
in a suitable functional setting.
\end{remark}

%%%%%%%%%%%%%%%%%%%%%%%%%%%%%%%%%%%%%%%%%%%%%%%%%%%%%%%%%%%%%%%%%%%%%%%%%%%%%%%%%%%%%%%%%%%%%%%%%%%%%%%%%%%%%%%%%%%%%%%%%
%%%%%%%%%%%%%%%%%%%%%%%%%%%%%%%%%%%%%%%%%%%%%%%%%%%%%%%%%%%%%%%%%%%%%%%%%%%%%%%%%%%%%%%%%%%%%%%%%%%%%%%%%%%%%%%%%%%%%%%%%

\section{Proof of the Theorem~\ref{main}}

\subsection{Local well-posedness}

While the equation \eqref{nls} is locally well-posed for data in $L^2$, working with data in 
$H^{0,1}$ requires a brief discussion.  The initial data space has norm 
\[
\|u_0\|^2_{H^{0,1}} = \|u_0\|_{L^2}^2 + \| x u_0\|_{L^2}^2.
\]
However, we cannot use this same space at later times since the weight
$x$ does not commute with the linear Schr\"odinger flow. Instead, we
introduce the vector field $L=x+it\partial_{x}$, which is the conjugate of $x$ 
with respect to the linear flow, $e^{\frac{it}{2} \partial_x^2} x = L e^{\frac{it}{2} \partial_x^2}$, as well as the
generator for the Galilean group of symmetries. Naturally we have  
\begin{equation*}
\begin{aligned}
\left[ i\partial_{t}+\frac{1}{2}\partial_{x}^2 \ , L \right] =0,\qquad L(\lambda u\vert u\vert ^2)=2\lambda \vert u\vert^2 Lu-\lambda u^2\overline{Lu}.
 \end{aligned}
\end{equation*}

Next, we state and prove a preliminary global result:

\begin{proposition}\label{local}
The equation \eqref{nls} is (globally) well-posed for initial data in $H^{0,1}$, in the sense that 
it admits a unique solution $u \in C(\R,L^2)$ such that $Lu \in C(\R,L^2)$. Further, such a solution 
is continuous away from $t = 0$, and satisfies $u \in C(\R\setminus\{0\}, L^\infty)$. 
Furthermore, near $t=0$ we have
\begin{equation}\label{point-small}
|u(t,x)| \lesssim t^{-\frac12} \|u_0\|_{H^{0,1}}.
\end{equation}
\end{proposition}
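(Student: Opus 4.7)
The plan is to combine standard $L^2$ global well-posedness for the cubic NLS in one space dimension with a commutator-based propagation of the weight $L=x+it\partial_x$, and to extract the pointwise decay from a Galilean analogue of the Gagliardo-Nirenberg inequality.

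First, local well-posedness in $L^2$ is standard via a contraction argument in $C_tL^2_x\cap L^6_{t,x}$ on a short time interval $[0,T]$ whose length depends only on $\|u_0\|_{L^2}$; the equation is $L^2$-subcritical, so together with mass conservation $\|u(t)\|_{L^2}=\|u_0\|_{L^2}$ (justified by smooth approximation) this yields $u\in C(\R,L^2)\cap L^6_{t,x,\mathrm{loc}}$, with uniqueness in this class.

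Next I propagate the weight. The two commutator identities stated just above the proposition imply that $Lu$ satisfies the linear Schr\"odinger equation
$$\left(i\partial_t+\tfrac12\partial_x^2\right)Lu=2\lambda|u|^2 Lu-\lambda u^2\overline{Lu},\qquad Lu|_{t=0}=xu_0\in L^2.$$
Treating $|u|^2$ and $u^2$ as coefficients, a Strichartz estimate applied to the Duhamel formula gives
$$\|Lu\|_{L^\infty_tL^2_x([0,T])}\lesssim\|xu_0\|_{L^2}+\|u\|_{L^6_{t,x}([0,T])}^2\|Lu\|_{L^\infty_tL^2_x([0,T])}.$$
Choosing $T$ short enough that the factor $\|u\|_{L^6_{t,x}}^2\ll 1$ absorbs the second term on the left, which yields $Lu\in C([0,T],L^2)$. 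Since the length of this Strichartz smallness step for $u$ depends only on $\|u_0\|_{L^2}$, which is conserved in time, the argument can be iterated indefinitely and provides $Lu\in C(\R,L^2)$. Continuity in $L^2$ of both $u$ and $Lu$ is built into the Strichartz contraction.

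Finally, for the pointwise bound, factor $u=e^{ix^2/(2t)}v$ for $t\neq 0$. A direct computation gives $Lu=it\,e^{ix^2/(2t)}\partial_x v$, so that $\|v\|_{L^2}=\|u\|_{L^2}$ and $\|\partial_x v\|_{L^2}=|t|^{-1}\|Lu\|_{L^2}$. The one-dimensional Gagliardo-Nirenberg inequality then yields
$$\|u\|_{L^\infty}=\|v\|_{L^\infty}\lesssim\|v\|_{L^2}^{1/2}\|\partial_x v\|_{L^2}^{1/2}=|t|^{-1/2}\|u\|_{L^2}^{1/2}\|Lu\|_{L^2}^{1/2}\lesssim|t|^{-1/2}\|u_0\|_{H^{0,1}},$$
where in the last step we use the local bound from the previous paragraph to control $\|Lu(t)\|_{L^2}$ near $t=0$. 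The continuity $u\in C(\R\setminus\{0\},L^\infty)$ follows from the same factorization combined with the continuity of $u$ and $Lu$ in $L^2$. The only step that requires any care is coordinating the two Strichartz smallness intervals in the second paragraph, and this is automatic since the local time of existence for the $L^2$ flow is controlled by $\|u_0\|_{L^2}$ alone, which is conserved; I do not expect a substantive obstacle.
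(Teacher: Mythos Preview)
Your proof is correct and follows essentially the same approach as the paper's: Strichartz-based $L^2$ well-posedness, propagation of $Lu$ via the linearized equation, and the pointwise bound via the factorization $u=e^{ix^2/(2t)}v$ together with Gagliardo--Nirenberg. The only cosmetic difference is that you run the contraction in $C_tL^2_x\cap L^6_{t,x}$ while the paper uses $L^\infty_tL^2_x\cap L^4_tL^\infty_x$; both are admissible Strichartz pairs and the arguments are interchangeable.
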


\begin{proof} We start with the $L^2$ well-posedness, which is based on the Strichartz
estimate for the linear inhomogeneous problem 
\[
(i\partial_{t}+\frac{1}{2}\partial_{x}^2)u = f, \qquad u(0) = u_0,
\]
which has the form
\begin{equation}\label{strichartz0}
\|u\|_{L^{\infty}_t L^2_x} + \|u\|_{L^4_t L^{\infty}_x} \lesssim \|u_0\|_{L^2} + \|f\|_{L^1_t L^2_x}.
\end{equation}
This allows us to treat the nonlinearity perturbatively and obtain the
unique local solution via the contraction principle in the space
$L_t^{\infty}(0,T; L^2_x) \cap L^4_t(0,T; L_x^{\infty})$ provided that $T$ is
small enough\footnote{This is exactly the scaling relation.}, $T \ll
\| u_0\|_{L^2}^4$. The local well-posedness in $L^2$ implies global
well-posedness due to the conservation of the mass $\|u\|_{L^2}^2$.

To switch to the $H^{0,1}$ data we need to write the equation for $Lu$,
which has the form
\begin{equation}\label{linearize}
(i\partial_{t}+\frac{1}{2}\partial_{x}^2)Lu = 2\lambda \vert u\vert^2 Lu-\lambda u^2\overline{Lu}.
\end{equation}
We remark that this is exactly the linearization of the  equation \eqref{nls}. 
The $L^2$ well-posedness of this problem also follows from the Strichartz estimate
\eqref{strichartz0}.

Finally, we consider pointwise bounds. Denoting $w =
ue^{-\frac{ix^2}{2t}}$, we have $ie^{-\frac{ix^2}{2t}} Lu =
it \partial_x w$. Hence, away from $t = 0$ we have $w \in C(\R
\setminus \{ 0\}; H^1)$, and the continuity property of $w$, namely $w
\in C_{loc}(\R \setminus \{ 0\}; C_0(\R))$, follows from the Sobolev
embedding $H^1(\R) \subset C_0(\R)$. Since $w$ has limit zero at infinity,
the similar property for $u$ also follows. Finally, the pointwise bound
\eqref{point-small} is a consequence of the Gagliardo-Nirenberg type estimate
\[
\|w\|_{L^\infty} \lesssim \|w\|_{L^2}^\frac12 \|\partial_x w\|_{L^2}^\frac12.
\]

\end{proof}

\subsection{ Wave packets and the asymptotic equation}
To study the global decay properties of solutions to \eqref{nls} and
\eqref{nls-pert}, we introduce a new idea, which is to test the
solution $u$ with wave packets which travel along the Hamilton flow. A
wave packet, in the context here, is an approximate solution to the
linear system, with $O(1/t)$ errors. Precisely, for each trajectory
$\Gamma_v := \{x = v t\}$, traveling with velocity $v$, we establish
decay along this ray by testing with a wave packet moving along the
ray.

To motivate the definition of this packet we recall some
useful facts. First, this ray is associated with waves which have 
spatial frequency 
\[
\xi_v := v = \frac{x}t.
\]
This is associated with the phase function
\[
\phi(t,x) := \frac{x^2}{2t}.
\]
Then it is natural to use as test functions  wave  packets  of the form 
\[
\Psi_{v}(t,x) :=  \chi\left(\frac{x - vt}{\sqrt{t}}\right)
e^{i\phi (t,x)} .
\]
Here we  take $\chi$ to be a Schwartz function. In other related
problems it might be more convenient to take $\chi$ with compact
support.  For normalization purposes we assume that
\[
\int \chi(y) dy = 1.
\]
The $t^\frac12$ localization scale is exactly the scale of wave packets
which are required to stay coherent on the time scale $t$. 
To see that these are reasonable approximate solutions we 
observe that we can compute
\begin{equation}\label{Ppsi}
(i \partial_t + \frac12 \partial_x^2) \Psi_v =  \frac1{2t} e^{i\phi} \partial_x \left[ 
t^\frac12 \chi'\left(\frac{x - vt}{\sqrt{t}}\right)  + 
i   (x-vt) \chi\left(\frac{x - vt}{\sqrt{t}}\right)\right],
\end{equation}
and observe that the right hand side has the same localization as
$\Psi_{v}$ and size smaller by a factor $t^{-1}$.  Thus one can think
of $\Psi_v$ as good approximate solutions for the linear Schr\"odinger
equation only on dyadic time scales $\Delta t \ll t$.

If one compares $\Psi_v$ with the fundamental solution to the linear 
Schr\"odinger equation, conspicuously the $t^{-\frac12}$ factor is missing.
Adding this factor does not improve the error in the interpretation of $\Psi_v$ as a good 
approximate solution, so we have preferred instead a normalization
which provides simpler ode dynamics for the function $\gamma$  defined below.

As a measure of the decay of $u$ along $\Gamma_v$ we use the function
\[ 
\gamma(t,v) := \int u \bar \Psi_{v} \, dx.
\]
For the purpose of proving part (a) of the theorem we only need to consider 
$\gamma$ along a single ray. However, in order to obtain the more precise 
asymptoptics in part (b) we will think of $\gamma$ as a function $\gamma(t,v)$.

We can also  express $\gamma(t,v)$ in terms of the 
Fourier transform of $u$,
\[
\gamma(t,v) = \int \hat u(t,\xi) \bar {\hat \Psi}(t,\xi) \, d\xi .
\]
Here a direct computation yields
\[
\begin{split}
\hat \Psi(t,\xi) =&  \frac{1}{\sqrt{2\pi }}\int e^{- i x \xi} e^{i \frac{x^2}{2t}}  \chi(t^{-\frac12} (x - vt)) \,dx\\
=&\frac{1}{\sqrt{2\pi }} e^{- i \frac{t\xi^2}2} e^{i \frac{t(\xi-v)^2}2}   \int e^{- i(x-vt)(\xi-v)} 
e^{i  \frac{(x-tv)^2}{2t}}  \chi(t^{-\frac12} (x - vt)) \, dx \\
= & t^{\frac12} e^{- i \frac{t\xi^2}2}  \chi_1(t^{\frac12}(\xi-v)),
\end{split}
\]
where $\chi_1 =  e^{i \frac{\xi^2}2} \widehat{e^{\frac{ix^2}2} \chi}$  is a Schwartz function
with the additional property that
\[
\int \chi_1(\xi)\,  d\xi = \int \chi(x) \, dx = 1.
\]
Then we can write
\begin{equation}\label{conv-xi}
\gamma(t,\xi) = e^{\frac{it \xi^2}2} \hat{u}(t,\xi) \ast_\xi  t^{\frac12} \chi_1(t^\frac12 \xi).
\end{equation}

Both the solution $u$ of \eqref{nls}  along the ray $\Gamma_v$  and its Fourier
transform evaluated at $v$ are  compared to $\gamma(t,v)$  as follows:
\begin{lemma}
\label{lema1}
The function $\gamma$ satisfies the bounds
\begin{equation}\label{bd-gamma}
\begin{split}
\|\gamma\|_{L^\infty} \lesssim t^\frac12 \|u\|_{L^\infty}, \qquad
\| \gamma \|_{L^2_v} \lesssim \|u\|_{L^2_x}, \qquad 
\| \partial_v  \gamma \|_{L^2_v} \lesssim \| Lu\|_{L^2_x}.
\end{split}
\end{equation}
We have the physical space bounds
\begin{equation}\label{diff-x}
\begin{split}
&\| u(t,v t) - t^{-\frac12}e^{i\phi(t,vt)} \gamma(t,v)\|_{L^2_v} \lesssim t^{-1} \|Lu \|_{L^2_x}, 
\\ 
& \| u(t,v t) - t^{-\frac12}e^{i\phi(t,vt)} \gamma(t,v)\|_{L^\infty} \lesssim t^{-\frac34} \|Lu \|_{L^2_x},
\end{split}\end{equation}
and the Fourier space bounds
\begin{equation}\label{diff-xi}
\begin{split}
& \| \hat u(t,\xi) -e^{-i\frac{t\xi^2}2} \gamma(t,\xi)  \|_{L^2_\xi} \lesssim t^{-\frac12}  \|Lu \|_{L^2_x}, 
\\
&\| \hat u(t,\xi) -e^{-i\frac{t\xi^2}2} \gamma(t,\xi)  \|_{L^\infty} \lesssim t^{-\frac14}  \|Lu \|_{L^2_x}.
\end{split}\end{equation}
\end{lemma}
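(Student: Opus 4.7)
The three bounds on $\gamma$ itself are the starting point. The $L^\infty$ estimate is immediate from $\|\Psi_v\|_{L^1_x}\lesssim t^{1/2}$ (change of variable $y = t^{-1/2}(x-vt)$). For the $L^2_v$ bound I would invoke the Fourier representation \eqref{conv-xi}: the kernel $t^{1/2}\chi_1(t^{1/2}\,\cdot)$ has $L^1$ norm equal to $\|\chi_1\|_{L^1}$, so Young's inequality gives $\|\gamma\|_{L^2_v}=\|\gamma\|_{L^2_\xi}\lesssim \|\hat u\|_{L^2}=\|u\|_{L^2}$. For the derivative bound, the key algebraic identity is that applying $L = x+it\partial_x$ to the packet produces exactly minus $i$ times the $v$-derivative:
\begin{equation*}
L\Psi_v = it^{1/2}\chi'(t^{-1/2}(x-vt))e^{i\phi} = -i\,\partial_v\Psi_v,
\end{equation*}
which one verifies by direct calculation, the $x\chi$ term cancelling against $it\partial_x(e^{i\phi})\chi = -x\chi e^{i\phi}$. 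Since $L$ is (formally) self-adjoint on $L^2$ in the sesquilinear pairing, this yields $\partial_v\gamma(t,v) = -i\langle Lu,\Psi_v\rangle$, and the $L^2_v$ bound for $\partial_v\gamma$ reduces to the already established $L^2$ bound with $u$ replaced by $Lu$.

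For the physical-space bounds \eqref{diff-x}, I would introduce $w := e^{-i\phi}u$. A direct computation gives $it\,\partial_x w = e^{-i\phi}Lu$, so $\|\partial_x w\|_{L^2_x} = t^{-1}\|Lu\|_{L^2_x}$. Since $e^{-i\phi}\bar\Psi_v = \bar\chi(t^{-1/2}(x-vt))$, changing variables $y=t^{-1/2}(x-vt)$ yields
\begin{equation*}
t^{-1/2}\gamma(t,v) = \int w(t,vt+t^{1/2}y)\,\bar\chi(y)\,dy.
\end{equation*}
Using $\int\chi = 1$, the difference $t^{-1/2}e^{i\phi(t,vt)}\gamma(t,v) - u(t,vt)$ becomes $e^{i\phi(t,vt)}\int[w(t,vt+t^{1/2}y)-w(t,vt)]\bar\chi(y)\,dy$. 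For the $L^2_v$ bound I apply Minkowski and use the translation estimate $\|w(\cdot+h)-w\|_{L^2_x}\leq |h|\|\partial_x w\|_{L^2}$ (with $h=t^{1/2}y$), together with the jacobian $x=vt$; the losses $t^{1/2}$ and $t^{-1/2}$ cancel, producing $\int|y||\chi(y)|\,dy\cdot\|\partial_x w\|_{L^2}\lesssim t^{-1}\|Lu\|_{L^2}$. For the pointwise bound I instead use Cauchy--Schwarz on the defining integral $w(vt+t^{1/2}y)-w(vt)=\int_0^{t^{1/2}y}\partial_x w$, giving a factor $t^{1/4}|y|^{1/2}\|\partial_x w\|_{L^2}$ and hence the stated $t^{-3/4}$ rate.

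The Fourier-side bounds \eqref{diff-xi} are the mirror image of the above, carried out using \eqref{conv-xi}. Setting $g(t,\xi):= e^{it\xi^2/2}\hat u(t,\xi)$, a short calculation with the formulas $\widehat{xu}=i\partial_\xi\hat u$ and $\widehat{u_x}=i\xi\hat u$ gives $\partial_\xi g = -ie^{it\xi^2/2}\widehat{Lu}$, so $\|\partial_\xi g\|_{L^2_\xi}=\|Lu\|_{L^2_x}$. By \eqref{conv-xi} we have $\gamma(t,\xi)-g(t,\xi) = \int[g(t,\xi-\eta)-g(t,\xi)]t^{1/2}\chi_1(t^{1/2}\eta)\,d\eta$, and the same Minkowski/FTC argument as in the previous paragraph, but now with convolution scale $t^{-1/2}$ in $\xi$, produces the $L^2_\xi$ bound with $\int|\eta|t^{1/2}|\chi_1(t^{1/2}\eta)|\,d\eta\lesssim t^{-1/2}$ and the $L^\infty$ bound with $\int|\eta|^{1/2}t^{1/2}|\chi_1(t^{1/2}\eta)|\,d\eta\lesssim t^{-1/4}$. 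Since $|\hat u - e^{-it\xi^2/2}\gamma|=|g-\gamma|$, these are exactly the claimed estimates.

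The main conceptual obstacle is identifying the correct quantity to differentiate: the identity $\partial_v\Psi_v = iL\Psi_v$ is what makes the wave packet machinery compatible with the conserved quantity $\|Lu\|_{L^2}$, and it is the reason the Galilean generator $L$ controls $v$-regularity of $\gamma$. Once this is in hand, everything else is a routine FTC/Cauchy--Schwarz exercise, with the two different scales ($t^{1/2}$ in physical space, $t^{-1/2}$ in Fourier space) producing the asymmetry between \eqref{diff-x} and \eqref{diff-xi}.
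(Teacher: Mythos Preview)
Your proof is correct and follows essentially the same route as the paper: both arguments hinge on writing $\gamma$ as a convolution (on the physical side via $w=e^{-i\phi}u$, on the Fourier side via \eqref{conv-xi}), identifying $\partial_v w$ and $\partial_\xi(e^{it\xi^2/2}\hat u)$ with $Lu$, and then running the standard Young/FTC/H\"older estimates for the difference with a unit-mass kernel at scale $t^{-1/2}$. Your treatment of $\partial_v\gamma$ via the identity $L\Psi_v=-i\,\partial_v\Psi_v$ together with the self-adjointness of $L$ is a clean variant of the paper's direct differentiation of the convolution formula, but it is the same computation in disguise.
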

\begin{proof}
  Denote $w:=e^{-i\phi} u$. Then $\partial_v w =i t \partial_x w =
 i t \partial_x ( e^{-i\phi} u) =i e^{-i\phi} Lu$, and  we can
  express $\gamma$ in terms of $w$ as a convolution with respect to
  the $v$ variable,
\begin{equation}\label{conv-x}
t^{-\frac12} \gamma (t,v) = w(t,vt) \ast_v t^\frac12 \chi(t^\frac12 v),
\end{equation}
where the kernel on the right has unit integral.
In other words,  $t^{-\frac12} \gamma (t,v)$ is a regularization of $w(t,vt)$ on the $t^{-\frac12}$
scale in $v$, or equivalently, a localization of $w(t,vt)$ to frequencies less than $t^{-\frac12}$.
Hence, via Young's inequality, we have the straightforward convolution bounds
\[
\| \gamma(t,v)\|_{L^\infty} \lesssim t^\frac12 \| w(t,vt) \|_{L^\infty} = t^\frac12 \| u\|_{L^\infty},
\]
\[
\| \gamma(t,v)\|_{L^2_v} \lesssim t^\frac12 \| w(t,vt) \|_{L^2_v} = \| u\|_{L^2_x},
\]
as well as 
\[
\| \partial_v \gamma(t,v)\|_{L^2_v} \lesssim t^\frac12 \| \partial_v w(t,vt) \|_{L^2_v} = \|L u\|_{L^2_x}.
\]
Here we have used the fact that  the $L_v^2$, $L^2_x$ norms are related by 
\[
\Vert f\Vert_{L^2_x}=t^{\frac{1}{2}}\Vert f\Vert_{L^2_v}.
\]

To bound the difference $t^{-\frac12} \gamma (t,v) -w(t,vt)$ we use the fact that 
the above kernel has unit integral to write
\begin{equation}\label{pdiff-x}
\begin{split}
|t^{-\frac12} \gamma (t,v) -w(t,vt)| = & \ 
 \left|\int (w(t,(v-z)t) -  w(t,vt)) \chi(t^\frac12 z) t^\frac12 \, dz\right|
\\ \leq  & \  \int |w(t,(v-z)t) -  w(t,vt)||\chi(t^\frac12 z)| t^\frac12  \, dz.
\end{split}
\end{equation}
To prove the pointwise bound in \eqref{diff-x} we use H\"older's inequality
to obtain
\begin{equation*}
%\label{holder-w}
|w(t,v t) - w(t,(v-z)t)| \lesssim |z|^\frac12 \|\partial_v w\|_{L^2_v},
\end{equation*}
which by \eqref{pdiff-x} leads to
\[
|e^{-i\phi} u(t,v t) - t^{-\frac12} \gamma(t,v)| \lesssim \|\partial_v w(t,vt)\|_{L^2_v}
\int  |z|^{\frac12}  t^\frac12 |\chi ( t^\frac12 z)| \, dz \approx
 t^{-\frac14} \| \partial_v w(t,vt)\|_{L^2_v} = t^{-\frac34} \|Lu \|_{L^2_x}.
\]
To prove the $L^2_v$ bound in \eqref{diff-x} we express the right hand side
in the last integrand in \eqref{pdiff-x} in terms of the derivative of $w$ to obtain
\[
 |t^{-\frac12} \gamma (t,v) -w(t,vt)| \lesssim
 \int_0^1 \int  |z| |\partial_v w(t,(v-hz)t)| t^\frac12 \chi(t^\frac12 z) t^\frac12 \, dz \, dh.
\]
Hence we can evaluate the $L^2$ norm as follows:
\[
\begin{split}
\|e^{-i\phi} u(t,v t) - t^{-\frac12} \gamma(t,v)\|_{L^2} \lesssim & \ \|\partial_v w(t,vt)\|_{L^2_v}
\int  |z|  t^\frac12 |\chi ( t^\frac12 z)| \, dz \\ \approx & \ 
 t^{-\frac12} \| \partial_v w(t,vt)\|_{L^2_v} = t^{-1} \|Lu \|_{L^2_x}.
\end{split}
\]
This concludes the proof of the bound \eqref{diff-x}. The estimate \eqref{diff-xi}
is obtained in a similar manner, but using \eqref{conv-xi} instead of \eqref{conv-x},
as well as the  relation  
\[
\| \partial_\xi (e^{\frac{it \xi^2}2} \hat u(t,\xi))\|_{L^2_\xi} = \| L u\|_{L^2_x} .
\]

\end{proof}
 By the previous Lemma~\ref{lema1} we can conclude that $\gamma$ is indeed a good approximation of $u$ along a ray,  but no information on the rate of decay of $\gamma$ was established.  Hence, the crucial next  step is to obtain an approximate ode dynamics for $\gamma(t,v)$:

\begin{lemma}
\label{lema2}
\label{ode}
If $u$ solves \eqref{nls} then we have
\begin{equation}\label{ode}
\dot \gamma(t,v) = -it^{-1} \lambda |\gamma(t,v)|^2 \gamma(t,v) - R(t,v),
\end{equation}
where the remainder $R$ satisfies
\begin{equation}
\label{LR}
\Vert R\Vert_{L_x^{\infty}}\lesssim t^{-\frac{1}{4}}\Vert Lu\Vert_{L^2_x} \left( t^{-1}+ \Vert u\Vert ^2_{L^{\infty}_x}\right), \qquad 
\Vert R\Vert_{L_v^2}\lesssim t^{-\frac12}\Vert Lu\Vert_{L^2_x} \left( t^{-1}+ \Vert u\Vert ^2_{L^{\infty}_x}\right).
\end{equation}
\end{lemma}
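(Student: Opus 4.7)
The plan is to differentiate $\gamma(t,v) = \int u\bar\Psi_v\, dx$ in $t$, substitute the NLS equation for $u_t$, and integrate by parts twice in $x$ so that the derivatives land on $\bar\Psi_v$. Using the identity $i\partial_t\Psi_v + \tfrac12 \partial_x^2 \Psi_v = E$ with $E$ the right-hand side of \eqref{Ppsi}, the $\Psi_{v,t}$ and $\Psi_{v,xx}$ contributions combine to give
\[
\dot\gamma(t,v) = -i\lambda \int u|u|^2 \bar\Psi_v\, dx + i\int u \bar E\, dx.
\]
This naturally splits the remainder as $R = R_1 + R_2$, where
\[
R_1 := -i\int u \bar E\, dx, \qquad R_2 := i\lambda\left(\int u|u|^2 \bar\Psi_v\, dx - t^{-1}|\gamma|^2 \gamma\right).
\]
The first term measures the failure of $\Psi_v$ to solve the linear Schr\"odinger equation exactly, and the second measures the failure of $u$ to be constant, equal to $t^{-1/2}e^{i\phi}\gamma(t,v)$, on the support of the wave packet.

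For $R_1$ I would pass to the modulated unknown $w := e^{-i\phi} u$, for which a short calculation (essentially done already in Lemma \ref{lema1}) gives $\partial_x w$ proportional to $t^{-1}e^{-i\phi} Lu$, so that $\|\partial_x w\|_{L^2_x} = t^{-1}\|Lu\|_{L^2_x}$. Since by \eqref{Ppsi} $\bar E$ equals $t^{-1}e^{-i\phi}$ times the derivative of a Schwartz profile localized on the $t^{1/2}$ scale in $x$, one integration by parts in $x$ converts $\int u \bar E\, dx$ into an inner product of $\partial_x w$ against a Schwartz weight of $L^2_x$ norm $\sim t^{1/4}$. Cauchy--Schwarz then yields the pointwise $L^\infty_v$ bound, while Minkowski after the substitution $y = t^{-1/2}(x-vt)$ yields the $L^2_v$ bound, with the stated $t$ powers in both cases.

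For $R_2$ the key observation is that $\bar\Psi_v = \bar\chi_v\, e^{-i\phi}$ with $\chi_v(x) := \chi(t^{-1/2}(x-vt))$, so
\[
\int u|u|^2 \bar\Psi_v\, dx = \int w|w|^2 \bar\chi_v\, dx,
\]
while the normalization $\int \chi = 1$ and the convolution identity \eqref{conv-x} show that $t^{-1/2}\gamma(t,v)$ is exactly a local average of $w$ near $x = vt$ at the wave packet scale. Substituting this constant in place of $w$ in the cubic integrand produces exactly $t^{-1}|\gamma|^2\gamma$. The resulting error is estimated pointwise by
\[
\bigl| w|w|^2 - (t^{-1/2}\gamma)|t^{-1/2}\gamma|^2 \bigr| \lesssim \bigl( |w|^2 + |t^{-1/2}\gamma|^2 \bigr)\, \bigl| w(t,x) - t^{-1/2}\gamma(t,v) \bigr|,
\]
from which $\|u\|_{L^\infty_x}^2$ factors out (using $\|t^{-1/2}\gamma\|_{L^\infty_v} \lesssim \|u\|_{L^\infty_x}$ from \eqref{bd-gamma}). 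The remaining integral $\int |w(t,x) - t^{-1/2}\gamma(t,v)|\,|\chi_v(x)|\, dx$ is controlled by representing the difference as $\int (w(t,x) - w(t,x')) K_v(x')\, dx'$ with $K_v = t^{-1/2}\chi_v$ of unit mass, and then using the H\"older-type bound $|w(t,x) - w(t,x')| \leq |x-x'|^{1/2}\|\partial_x w\|_{L^2_x}$ together with the effective localization $|x-x'| \lesssim t^{1/2}$ on the joint support. This yields $t^{-1/4}\|Lu\|_{L^2_x}$ pointwise in $v$, and $t^{-1/2}\|Lu\|_{L^2_x}$ in $L^2_v$ by Minkowski.

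The main technical obstacle will be the cubic error $R_2$: one must recognize that $t^{-1/2}\gamma(t,v)$ is precisely the correct coarse-grained representative of $w$ at the wave packet scale, and that the only useful regularity of $w$ comes through $\|\partial_x w\|_{L^2_x} = t^{-1}\|Lu\|_{L^2_x}$. The balancing of the $t^{1/2}$ localization of $\chi_v$ against the $t^{-1}$ size of $\partial_x w$ is what produces the $t^{-1/4}$ and $t^{-1/2}$ losses in $\|R\|_{L^\infty_v}$ and $\|R\|_{L^2_v}$, matching the exponents in \eqref{LR}.
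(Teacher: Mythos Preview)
Your argument is correct and follows essentially the same route as the paper. The only organizational difference is in the nonlinear error: the paper inserts the intermediate value $u(t,vt)$, splitting the cubic remainder into a term $-i\lambda\int u\bar\Psi_v(|u|^2-|u(t,vt)|^2)\,dx$ and a term $i\lambda\gamma(|u(t,vt)|^2-t^{-1}|\gamma|^2)$, whereas you compare $w|w|^2$ directly to $(t^{-1/2}\gamma)|t^{-1/2}\gamma|^2$ in one step. Both reduce to the same H\"older/Minkowski estimate on $w(t,x)-w(t,x')$ over the wave packet scale, so neither is materially simpler; the paper's split has the minor advantage that its second piece is immediately handled by the already-proved bounds \eqref{bd-gamma} and \eqref{diff-x}. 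One small slip: the Schwartz weight you pair with $\partial_x w$ in $R_1$ has $L^2_x$ norm $\sim t^{3/4}$ (or $t^{-1/4}$ if you absorb the $(2t)^{-1}$ prefactor), not $t^{1/4}$; combined with $\|\partial_x w\|_{L^2_x}=t^{-1}\|Lu\|_{L^2_x}$ this still gives the correct $|R_1|\lesssim t^{-5/4}\|Lu\|_{L^2_x}$.
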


\begin{proof}
 A direct computation yields
\begin{equation*}
\begin{aligned}
\dot{\gamma }(t)= &  \int   u_t \bar{\Psi}_v+ u \bar \Psi_{vt} \, dx 
= \int i( \frac{1}{2}u_{xx}-\lambda u\vert u\vert^2)  \bar{\Psi}_v  + u \bar \Psi_{vt} \, dx 
\\ = & \int - i u  \overline{( i \partial_t + \frac12 \partial_x^2) \Psi_v}  
-  i\lambda u\vert u\vert^2\bar{\Psi}_v\, dx .
\end{aligned}
\end{equation*}
Using the relation \eqref{Ppsi} and integrating by parts we obtain
\begin{equation*}
\begin{aligned}
\dot{\gamma }(t)&=\int  i \frac1{2t}\partial_x ( 
t^\frac12 \chi'  + i  (x-vt) \chi)   e^{-i\phi}  u \, dx  - \int i\lambda u\vert u\vert^2\bar{\Psi}_v
 \, dx \\
& = - \int  \frac1{2t^2} ( 
 t^\frac12 \chi'  + i  (x-vt) \chi)   e^{-i\phi}  L u \, dx  - \int i\lambda u\vert u\vert^2\bar{\Psi}_v
 \, dx.
\end{aligned}
\end{equation*}
Hence we can  write an evolution equation for $\gamma(t)$ of the form
\begin{equation*}
\begin{aligned}
\dot{\gamma }(t,v)&=-i\lambda t^{-1} \vert \gamma(t,v)\vert^2\gamma(t)- R(t,v),
\end{aligned}
\end{equation*}
where  $R(t,v)$ contains error terms which are the contributions arising from using $\Psi_v$ as a good approximation of the solution of the linear Sch\"odinger equation, and also from substituting  $u$ by $\gamma$ in the cubic nonlinearity.
We write the remainder $R(t,v)$ as a sum of three quantities which can be easily bounded:
\begin{equation*}
%\label{rest}
\begin{aligned}
R(t,v):= & -\int \frac1{2t^2} ( t^\frac12 \chi'  + i  (x-vt) \chi)   e^{-i\phi}  L u \, dx
 -i\lambda \int  u \bar{\Psi}_v (\vert u\vert^2-\vert u(t,vt)\vert^2 )\, dx
\\ & \quad  \quad  \quad \quad   \quad  \quad  \,+ i \lambda \gamma(|u(t,vt)|^2 - t^{-1} |\gamma(t,v)|^2) \\
:= & R_1 + R_2 + R_3.
\end{aligned}
\end{equation*}

The integral $R_1$ is expressed as a convolution in $v$,
\[
R_1 = - \frac{1}{t}( t^\frac12 \chi'(t^\frac12 v) + i t v \chi(t^\frac12 v)) \ast_v (\partial_v w(t,vt)) .
\]
Hence, by H\"older's inequality we obtain the pointwise bound
\[
|R_1| \lesssim t^{-\frac34} \|  \partial_v w(t,vt)\|_{L^2_v} =  t^{-\frac54} \| Lu\|_{L^2_x} ,
\]
while estimating the convolution kernel in $L^1_v$ yields the $L_x^2$ bound
\[
\|R_1\|_{L^2_v} \lesssim t^{-1}  \|  \partial_v w(t,vt)\|_{L^2_v} =  t^{-\frac32} \| Lu\|_{L^2_x} .
\]
Since $|u| = |w|$, the second term $R_2:= -i\lambda \int  u \bar{\Psi}_v (\vert u\vert^2-\vert u(t,vt)\vert^2 )\, dx $ is bounded by
\[
\begin{split}
|R_2(t,v)| \lesssim & \ \| u\|_{L^\infty}^2 \int |\chi(t^{-\frac12}(x-vt))| (|w(t,x) - w(t,vt)|) \, dx
\\ = & \  t^\frac12 \| u\|_{L^\infty}^2 \int |\chi(t^{\frac12}z)| (|w(t,(v-z)t) - w(t,vt)|) t^\frac12 \, dz,
\end{split}
\] 
where the last integrand is the same as in \eqref{pdiff-x}.
Then $R_2$ is estimated exactly as in the proof of \eqref{diff-x} following \eqref{pdiff-x}.

Finally, for $R_3$ it suffices to combine the estimates \eqref{bd-gamma} and \eqref{diff-x}.

\end{proof}
%%%%%%%%%%%%%%%%%%%%%%%%%%%%%%%%%%%%%%%%%%%%%%%%%%%%%%%%%%%%%%%%%%%%%%%%%%%%%%%%%%%%%%%%%%%%%%%%%%%%%%%%%%%%%%%%%%%%%%%%%

\subsection{Proof of the global well-posedness result.}
From Proposition~\ref{local} we know that a global solution exists, so
it remains to establish the bounds \eqref{energy} and
\eqref{point}. Proposition~\ref{local} also shows that $\|
u(t)\|_{L^\infty}$ is continuous in time away from $t =0$.  Then a
continuity argument implies that it suffices to prove these bounds
under the additional bootstrap assumption:
\begin{equation}
\label{boot}
\Vert u\Vert_{L^{\infty}}\leq D\epsilon \vert t\vert ^{-\frac{1}{2}},
\end{equation}
where $D$ is a large constant such that $1\ll D \ll
\epsilon^{-1}$. Then we want to prove the energy bound \eqref{energy},
and then show that \eqref{point} holds with an implicit constant which
does not depend on $D$.

\textit{The energy estimate for $Lu$:}  
To advance frome time $0$ to time $1$ we use the local well-posedness result above.
This gives
\[
\| Lu(1)\|_{L^2} \lesssim \| xu(0)\|_{L^2} \leq \epsilon.
\]
To move forward in time past time $1$ we use energy estimates
in \eqref{linearize} and then  \eqref{boot} to obtain
\begin{equation*}
\begin{aligned}
%\label{m}
\Vert Lu(t)\Vert_{L^2}&\leq\Vert Lu(1)\Vert_{L^2}+ \int_{1}^t\Vert u(s)\Vert ^2_{\infty}\Vert Lu(s)\Vert_{L^2}\, ds\\
&\leq\Vert Lu(1)\Vert_{L^2}+D^2\epsilon ^2\int_{1}^t s^{-1}\Vert Lu(s)\Vert_{L^2}\, ds.
\end{aligned}
\end{equation*}

Applying Gronwall's inequality  gives 
\begin{equation}
\label{L}
\Vert Lu(t)\Vert_{L^2}\lesssim \epsilon (1+t)^{D^2\epsilon ^2},
\end{equation}
which, combined with the conservation of mass, leads to
\begin{equation*}
\Vert e^{-\frac{it}{2}\partial_{x}^2}u(t)\Vert_{H^{0,1}}\lesssim \epsilon (1+t)^{D^2\epsilon ^2}.
\end{equation*}

\textit{The pointwise decay bound:} 
From the bound \eqref{diff-x} in Lemma~\ref{lema1} and \eqref{L} we get 
\begin{equation*}
\begin{aligned}
 \Vert e^{-i\phi}u-t^{-\frac{1}{2}}\gamma \Vert_{L_x^{\infty}}\lesssim
 t^{-\frac{3}{4}}\Vert Lu\Vert_{L^2_x}\lesssim\epsilon (1+t)^{-\frac{3}{4}+D^2\epsilon^2},
 \end{aligned}
 \end{equation*}
so it remains to estimate $\gamma$. At time $t = 1$ we can use \eqref{point-small} 
and the pointwise part of \eqref{bd-gamma} to conclude that 
\begin{equation*}
\| \gamma (1,v)\|_{L^\infty} \lesssim \epsilon.
\end{equation*}
On the other hand, using our bootstrap assumption \eqref{boot} and the $L^2$ bound \eqref{L}
in  Lemma~\ref{lema2} we obtain a good bound for $R(t,v)$, namely
 \begin{equation*}
 %\label{bootR} 
\| R(t,v) \|_{L^\infty} \lesssim \epsilon ( 1 + D^2 \epsilon^2)  t^{-\frac{5}{4}+D^2\epsilon^2}.
 \end{equation*}
 Then integrating in \eqref{ode} we obtain
\begin{equation*}
\begin{aligned}
|\gamma(t,v)| \leq |\gamma(1,v)| + \int_{1}^t |R(s,v)| ds  \lesssim 
\epsilon (1+D^2\epsilon^2),
\end{aligned}
 \end{equation*}
 which leads to
 \begin{equation*}
\begin{aligned}
\vert u\vert \lesssim (\epsilon +D^2\epsilon^3)\vert t\vert^{-\frac{1}{2}}.
\end{aligned}
 \end{equation*}
 Under the constraint $1\ll D\ll \epsilon^{-1}$ we obtain
 \eqref{point}, and conclude the bootstrap argument.

%%%%%%%%%%%%%%%%%%%%%%%%%%%%%%%%%%%%%%%%%%%%%%%%%%%%%%%%%%%%%%%%%%%%%%%%%%%%%%%%%%%%%%%%%%%%%%%%%%%%%%%%%%%%%%%%%%%%%%

\subsection{The asymptotic expansion of the solution}

To construct the asymptotic profile $W$ we use the ode in  Lemma~\ref{lema2}
for $\gamma(t,v)$. The inhomogeneous term $R(t,v)$ is estimated in $L^\infty$ 
and $L^2_v$ by combining  \eqref{LR} with \eqref{point} and \eqref{L} to obtain 
\begin{equation}\label{Rbd}
\| R(t,v)\|_{L^\infty} \lesssim \epsilon t^{-\frac{5}{4}+D^2\epsilon^2}, 
\qquad \| R(t,v)\|_{L^2_v} \lesssim \epsilon t^{-\frac{3}{2}+D^2\epsilon^2}.
 \end{equation}
The ODE for $\gamma$, namely
\begin{equation*}
\dot{\gamma}(t) = -\frac{i}{t}\vert \gamma (t)\vert ^2 \gamma (t)-R(t,v)
\end{equation*}
 can be explicitly solved in polar
coordinates. Since $R(t,v)$ in uniformly integrable in time, it
follows that for each $v$, $\gamma(t,v)$ is well approximated at infinity by a
solution to the unperturbed ODE corresponding to $R_1=0$, 
in the sense that
\begin{equation}
\label{daniel}
\gamma (t,v)=W(v) e^{i\vert W(v) \vert^2 \log t }+O_{L_v^{\infty}}(\epsilon t^{-\frac{1}{4}+D^2\epsilon^2}).
\end{equation}
Integrating the $L^2_v$ part of \eqref{Rbd}  leads to a similar $L^2_v$ bound
\begin{equation}
\label{daniel2}
\gamma (t,v)=W(v) e^{i\vert W(v) \vert^2 \log t }+O_{L_v^{2}}(\epsilon t^{-\frac{1}{2}+\epsilon ^2D^2}).
\end{equation}
Then the asymptotic expansions in \eqref{asy}, \eqref{asy1} follow directly from \eqref{diff-x}
and \eqref{diff-xi}, where $\|Lu\|_{L^2}$ is bounded as in \eqref{L}.

It remains to establish the regularity of $W$. By conservation of mass we have 
\[
\|u(0)\|_{L^2_x} = \|u(t)\|_{L^2_x} = t^{\frac12} \| w(t,vt)\|_{L^2_v}.
\]
 Hence by  \eqref{diff-x} and \eqref{daniel2}
we obtain 
\begin{equation*}
\Vert W\Vert _{L^2_x}=\Vert u\Vert _{L^2_x}.
\end{equation*}
On the other hand, from \eqref{daniel} and \eqref{daniel2} we get
\[
\| W(v) - \gamma(t,v) e^{-i |\gamma(t,v)|^2 \log t}\|_{L^2_v} \lesssim  \epsilon t^{-\frac12+D^2 \epsilon^2} \log t ,
\]
while by \eqref{bd-gamma} and \eqref{L} we have
\[
\| \partial_v [\gamma(t,v) e^{-i |\gamma(t,v)|^2 \log t}] \|_{L^2_v} \lesssim \epsilon t^{D^2 \epsilon^2}  \log t .
\]
It follows that  for all large $t$ we have
\begin{equation*}
W( v)=O_{H^1_v}(\epsilon t^{D^2 \epsilon^2}\log t)+O_{L^2_v}(\epsilon t^{-\frac{1}{2}+D^2 \epsilon^2}\log t),
\end{equation*}
 so by interpolation we obtain for large enough $C$ the regularity 
\[
\|W\|_{H^{1-C\epsilon^2}_v} \lesssim \epsilon.
\]

%%%%%%%%%%%%%%%%%%%%%%%%%%%%%%%%%%%%%%%%%%%%%%%%%%%%%%%%%%%%%%%%%%%%%%%%%%%%%%%%%%%%%%%%%%%%%%%%%%%%%%%%%%%%%%%%%%%%%%%%%

\subsection{ The asymptotic completeness problem}

Here we solve the problem from infinity. For convenience, throughout this section, 
we set $\lambda =1$.  The naive idea would be to start with the asymptotic profile 
\[
u_{asymptotic} = \frac{1}{\sqrt{t}}  e^{\frac{ix^2}{2t}}  {W}( x/t) e^{i |{W}(x/t)|^2\log t},
\]
and correct this to an exact solution $u$ to the cubic NLS \eqref{nls}, by perturbatively 
solving the equation for the difference from infinity. However, as defined above, the 
function $u_{asymptotic} $ does not have enough regularity in order for it to be a 
good approximate solution. To remedy this, we replace $W$ in the above formula 
with a regularization of $W$ on the time dependent scale, namely
\[
\mathcal{W}(t,v):= W_{<t^\frac12}( v),
\]
which selects the frequencies less than $t^\frac12$ in $W$.
This is the analogue of the function $\gamma$ defined for forward problem,
with the same time dependent regularization scale. Then our approximate solution 
is 
\[
u_{app} = \frac{1}{\sqrt{t}} e^{\frac{ix^2}{2t}} \mathcal{W}( t, x/t) e^{i |\mathcal{W}(t,x/t)|^2\log t}.
\]
To start with we make the more general assumption that 
\begin{equation}\label{W-bound}
\| W\|_{H^{1+2\delta}_v} \leq M, \qquad M,\delta > 0, \qquad \delta \gg M^2.
\end{equation}
 Then by Bernstein's inequality we have the bounds 
\[
\| \mathcal{W}(t,v) - W(v)\|_{L^2_v} \lesssim M t^{-\frac12- \delta},
 \qquad \| \mathcal{W}(t,v) - W(v)\|_{L^\infty} \lesssim M t^{-\frac14- \delta} ,
\]
which imply that the functions $u_{asymptotic}$ and $u_{app}$ 
are equally good as  asymptotic profiles,
\[
\| u_{asymptotic} - u_{app}\|_{L^2_x} \lesssim M t^{-\frac12-\delta},
 \qquad \| u_{asymptotic} - u_{app}\|_{L^\infty} \lesssim \epsilon t^{-\frac14-\delta}. 
\]

To find the exact solution $u$  matching $u_{app}$ at infinity we denote by $f$ 
the error
\begin{equation}
\label{f-def}
f = (i \partial_t +\frac{1}{2}\partial^2_{x}) u_{app} - u_{app} |u_{app}|^2,
\end{equation}
 and then solve for the diffrence $v = u - u_{app}$
%To find the solution $u$ matching $u_{app}$ at infinity we solve for the difference $v = u - u_{app}$,
\begin{equation*}
%\label{v-eq}
(i \partial_t +\frac{1}{2} \partial^2_{x} )v =  (u_{app} +v)\, |u_{app} + v|^2 - u_{app} |u_{app}|^2 - f.
\end{equation*}  
The $u_{app}$-cubic term cancels, and we are left with 
\begin{equation}
\label{eq3}
\begin{aligned}
&(i \partial_t + \frac{1}{2}\partial^2_x)v = N(v,u_{app}) - f, \qquad v(\infty) = 0,
\end{aligned}
\end{equation}
where
\[
N(v,u_{app}) =  v\vert v\vert ^2 + v^2 \bar{ u}_{app} +2\vert v\vert ^2u_{app}+2 v \vert u_{app}\vert ^2  +\bar{v}u^2_{app}.
\]
The solution operator for the inhomogeneous Schr\"odinger equation with zero Cauchy data 
at infinity 
\[
(i\partial_{t}+\frac{1}{2}\partial_{x}^2)v = f, \qquad u(\infty) = 0,
\]
is given by 
\begin{equation*}
%\label{inhom-eq}
v(t)=i\lambda \int _{t}^{\infty}e^{\dfrac{(t-s)\partial_x^2}{2}}  f(s)  \, ds:=\Phi f .
\end{equation*}

Hence the equation \eqref{eq3} is rewritten in the form
\begin{equation}\label{v-fp}
v = \Phi N(v,u_{app}) - \Phi f .
\end{equation}
We will solve this via the contraction principle,  using the energy/Strichartz type bound \eqref{strichartz0}
\begin{equation}
\|\Phi f\|_{L^{\infty}_t(T,\infty; L_x^2)} + \|\Phi f\|_{L^4_t(T,\infty; L^{\infty}_x)} \lesssim  \|f\|_{L^1_t(T,\infty; L_x^2)}.
\end{equation}
The equation for $v$ will be solved in a function space $X$ defined
using the above $L^{\infty}_t L^2_x$ and $L^4_t L^{\infty}_x$ norms, with
appropriate time decay. Precisely, we set
\[
\Vert v\Vert_{X}:=\sup_{T \geq 1} \frac{T^{\frac{1}{2}+\delta}}{(1+M^2
  \log t)^2}\left( \| v \|_{L_t^\infty(T,2T; L_x^2)} + \| v
  \|_{L_t^4(T,2T; L_x^{\infty})} \right).
\]
We also want a bound for $Lv$, for which we need to use the larger space
 $\tilde X$, whose  norm carries a different time decay weight,
\[
\Vert w\Vert_{\tilde X}:=\sup_{T \geq 1} \frac{T^{\delta}}{(1+M^2 \log
  t)^3}\left( \| w \|_{L_t^\infty(T,2T; L_x^2)} + \| w \|_{L_t^4(T,2T;
    L_x^{\infty})} \right).
\]

The first task at hand is to estimate the contribution of the inhomogeneous term $f$.
This is done in the following 

\begin{lemma}\label{l:f}
Assume that \eqref{W-bound} holds with $\delta \gtrsim M^2$. Then $f$ defined by 
\eqref{f-def} satisfies the following estimates:
\begin{equation}\label{f-bd}
\| \Phi f\|_{X} + \| \Phi Lf\|_{\tilde X} \lesssim M.
\end{equation}
\end{lemma}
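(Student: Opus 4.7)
The strategy is to compute $f$ explicitly, exhibit the resonant cancellation that is built into the ansatz $u_{app}$, and then estimate the surviving error terms in $L^1_t L^2_x$ so as to invoke the Strichartz bound for the backward Duhamel operator $\Phi$.

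First, write $u_{app} = t^{-1/2} e^{i\phi} h(t, v)$ with $v = x/t$, $\phi(t,x) = x^2/(2t)$, and $h(t,v) = \mathcal{W}(t,v)\, e^{i|\mathcal{W}(t,v)|^2 \log t}$. Using $\phi_t + \tfrac{1}{2}\phi_x^2 = 0$ and $\phi_{xx} = 1/t$, a direct computation gives
\[
e^{-i\phi}\bigl( i\partial_t + \tfrac{1}{2}\partial_x^2 \bigr) u_{app} = i t^{-1/2} h_t + \tfrac{1}{2} t^{-5/2} h_{vv}.
\]
The contribution from $\partial_t \log t = 1/t$ inside $h_t$ produces a cubic piece of the form $\pm t^{-3/2}|\mathcal{W}|^2 \mathcal{W}\, e^{i|\mathcal{W}|^2 \log t}$, which is exactly arranged (by the choice of phase) to cancel $e^{-i\phi} u_{app}|u_{app}|^2 = t^{-3/2}|\mathcal{W}|^2 \mathcal{W}\, e^{i|\mathcal{W}|^2 \log t}$. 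After this cancellation,
\[
e^{-i\phi} f = t^{-1/2} e^{i|\mathcal{W}|^2 \log t} \bigl( i \mathcal{W}_t \pm \mathcal{W}\, \partial_t(|\mathcal{W}|^2) \log t \bigr) + \tfrac{1}{2} t^{-5/2} h_{vv} .
\]

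Next I bound $\|f(t)\|_{L^2_x}$ and $\|Lf(t)\|_{L^2_x}$. Using $\|F\|_{L^2_x} = t^{1/2}\|F(t, t\cdot)\|_{L^2_v}$ together with the frequency localization $\mathcal{W}(t,\cdot) = P_{<\sqrt{t}} W$, Bernstein plus \eqref{W-bound} yields $\|\mathcal{W}\|_{L^2_v \cap L^\infty_v} \lesssim M$, the derivative bounds $\|\partial_v^k \mathcal{W}\|_{L^2_v} \lesssim M t^{(k-1-2\delta)/2}$ when $k > 1+2\delta$, and crucially $\|\mathcal{W}_t\|_{L^2_v} \lesssim M t^{-3/2-\delta}$ (the symbol of $\mathcal{W}_t$ has size $t^{-1}$ on the shell $|\xi| \sim \sqrt{t}$). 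Substituting these into the displayed formula for $e^{-i\phi}f$, and tracking that each $v$- or $t$-derivative of the phase produces an extra $\log t$, leads to
\[
\|f(t)\|_{L^2_x} \lesssim M (1+M^2\log t)^2 \, t^{-3/2-\delta}.
\]
For $Lf$, the identity $L\bigl[ e^{i\phi} G(t, x/t) \bigr] = i e^{i\phi} \partial_v G$, which follows from $L = x + it\partial_x$ and $\phi_x = x/t$, reduces the estimate to taking one extra $v$-derivative of $e^{-i\phi} f$. Each such derivative costs at most a factor $\sqrt{t}$ by the $\sqrt{t}$-frequency localization of $\mathcal{W}$, hence
\[
\|Lf(t)\|_{L^2_x} \lesssim M(1+M^2\log t)^2 \, t^{-1-\delta}.
\]

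Finally, the Strichartz bound \eqref{strichartz0} applied to $\Phi$ gives
\[
\|\Phi g\|_{L^\infty_t(T,\infty; L^2_x) \cap L^4_t(T,\infty; L^\infty_x)} \lesssim \|g\|_{L^1_t(T,\infty; L^2_x)} .
\]
Integrating in $t$ the pointwise bounds above yields
\[
\|f\|_{L^1_t(T,\infty; L^2_x)} \lesssim M(1+M^2\log T)^2\, T^{-1/2-\delta}, \qquad \|Lf\|_{L^1_t(T,\infty; L^2_x)} \lesssim M(1+M^2\log T)^2\, T^{-\delta},
\]
and dividing by the weights $T^{1/2+\delta}(1+M^2\log T)^{-2}$ and $T^\delta(1+M^2\log T)^{-3}$ that define $\|\cdot\|_X$ and $\|\cdot\|_{\tilde X}$ respectively produces $\|\Phi f\|_X \lesssim M$ and $\|\Phi Lf\|_{\tilde X} \lesssim M(1+M^2\log T)^{-1} \lesssim M$, as claimed.

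The main obstacle is the cancellation in the first step: the $|\mathcal{W}|^2 \log t$ phase in $u_{app}$ is tuned so that the $\partial_t \log t = 1/t$ piece exactly kills the cubic resonance, with no residual $O(t^{-3/2})$ term. The polylog weights $(1+M^2\log T)^{2,3}$ in $X$ and $\tilde X$ are then calibrated precisely to absorb the $\log t$ factors generated each time the phase is differentiated in $v$, while the $\delta$-gain from the hypothesis $W \in H^{1+2\delta}$ supplies the $T^{-\delta}$ improvement needed to make $f$ and $Lf$ absolutely integrable in time.
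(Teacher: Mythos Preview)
Your computation of $f$ and the pointwise bounds $\|f(t)\|_{L^2_x}\lesssim M(1+M^2\log t)^2 t^{-3/2-\delta}$ and $\|Lf(t)\|_{L^2_x}\lesssim M(1+M^2\log t)^{2} t^{-1-\delta}$ are essentially correct and match the paper's \eqref{L1}. The gap is in the last step, where you integrate the $Lf$ bound in time. You wrote
\[
\|Lf\|_{L^1_t(T,\infty;L^2_x)} \lesssim M(1+M^2\log T)^2 T^{-\delta},
\]
but $\int_T^\infty t^{-1-\delta}\,dt = \delta^{-1}T^{-\delta}$, so the honest bound is $\delta^{-1}M(1+M^2\log T)^{2}T^{-\delta}$. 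Under the hypothesis $\delta\gtrsim M^2$ (equivalently $\delta^{-1}\lesssim M^{-2}$) this gives only $\|\Phi Lf\|_{\tilde X}\lesssim \delta^{-1}M$, which can be as large as $M^{-1}$ and does not close. The paper flags exactly this obstruction after \eqref{L1}.

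The paper's fix is not a sharper pointwise estimate but a structural one: it isolates the principal part $h$ of $Lf$, namely the piece built from $W_{t^{1/2}}$ and $\mathcal W''$, and observes that $h$ at dyadic time $t$ is essentially supported at dyadic frequency $t^{1/2}$. After pulling back by the free flow, different dyadic time shells land in nearly disjoint dyadic frequency shells, so the time integral is controlled in $\ell^2$ over those shells rather than $\ell^1$; this is what converts the $\delta^{-1}$ loss into an acceptable bound. The remaining piece $Lf-h$ has the better decay $t^{-5/4-\delta}$ and can be integrated directly. To make this work one needs the $V^2$ Strichartz embedding rather than the plain $L^1_tL^2_x\to L^\infty_tL^2_x\cap L^4_tL^\infty_x$ estimate you invoked. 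Your argument is fine for $\Phi f$, but for $\Phi Lf$ you need to supply this extra almost-orthogonality step.
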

We postpone the proof of the lemma in order to conclude first the 
proof of the main result.  We succesively consider the equation for $v$ 
and the equation for $Lv$.

\bigskip

{\em (i) The equation for $v$ in $L^2$.} In view of \eqref{f-bd}, in
order to solve the equation \eqref{v-fp} in $X$ using the contraction
principle we need to show that the map $v \to \Phi N(v,u_{app})$ maps
$X$ into $X$ with a small Lipschitz constant for $v$ in a ball of
radius $CM$, where $1 \ll C \ll M^{-1}$.  Then we obtain a solution $v$ 
satisfying 
\begin{equation}\label{vbound}
\|v\|_{X} \lesssim M.
\end{equation}
Using the linear bound
\eqref{strichartz0}, it suffices to show that
\begin{equation}
\| N(v_1,u_{app}) - N(v_2,u_{app})\|_{L^1_t(T,\infty;L_x^2)}
\lesssim \|v_1-v_2\|_{X} (M+ \|v_1\|_{X}^2 + \|v_2\|_X^2) .
\end{equation}
For simplicity we consider the case $v_2=0$ and show that
\begin{equation}\label{Nv}
\| N(v,u_{app}) \|_{L^1_t(T,\infty;L_x^2)}
\lesssim M \|v\|_{X} + \|v\|_{X}^3. 
\end{equation}
The general case is identical.
To bound $\|  N(v,u_{app}) \|_{L^1_t(T,\infty;L^2_x)}$ we 
 we divide $\left[ T, \infty \right) $ into dyadic
subintervals, estimate $N(u_{app},v;f)$ in each such interval, and then
sum up.  For the terms in $N$ we succesively compute 
\begin{equation}
\label{m1}
\begin{aligned}
\Vert v\vert u_{app}\vert^2\Vert_{L^1_t (T,2T;L^2_x)}&\lesssim T \Vert u_{app}\Vert ^2_{L^{\infty}([T,2T]\times \mathbb R)}\Vert v\Vert_{L_t^{\infty}(T,2T;L^2_x)} \lesssim M^2 T^{-\frac{1}{2}-\delta}(1+M^2 \log t)^2\Vert v\Vert_{X} ,
\end{aligned}
\end{equation}
\begin{equation}
\label{m2}
\begin{aligned}
\Vert \vert v\vert^2u_{app}\Vert_{L^1_t (T,2T;L^2_x)}\lesssim & \
 T^\frac34 \Vert u_{app}\Vert_{L^\infty([T,2T]\times \mathbb R)}
\Vert v\Vert_{L_t^{\infty}(T,2T;L^2_x)} \Vert v\Vert_{L_t^{4}(T,2T;L^{\infty}_x)} 
\\ \lesssim &  \ M  T^{-\frac{3}{4}+2\delta}(1+M^2 \log t)^4\Vert v\Vert^2_{X},
\end{aligned}
\end{equation}
respectively
\begin{equation}
\label{m3}
\begin{aligned}
  \Vert v\vert v\vert ^2\Vert_{L^1_t (T,2T;L^2_x)}&\lesssim
  T^{\frac{1}{2}}\Vert v\Vert_{ L^{\infty}_t (T,2T;L^2_x)} \Vert
  v\Vert^2_{L^4_t (T,2T;L^{\infty}_x)}  \lesssim
  T^{-1-3\delta}(1+M^2 \log t)^6 \| v\|_X^3 .
\end{aligned}
\end{equation}
Thus, \eqref{Nv} follows.

\bigskip

{\em (ii) The equation for $Lv$ in $L^2$.} 
 Applying  $L$  to \eqref{eq3} we obtain 
\begin{equation*}
%\label{Leq3}
\begin{aligned}
(i \partial_t + \frac{1}{2}\partial^2_x )Lv &= LN(v, u_{app}) - Lf.
\end{aligned}
\end{equation*}
Then for $Lv$ we seek to solve the linear problem
\[
Lv = \Phi(LN(v,u_{app})) - \Phi Lf
\]
in the space $\tilde X$. The bound for $\Phi Lf$ is provided by 
\eqref{f-bd}.  We expand $LN(v,u_{app})$ as
\begin{equation*}
\begin{aligned}
LN(v, u_{app}\, ;f ):&=L( v\vert v\vert ^2) + L(v^2 \bar{ u}_{app}) +2L(\vert v\vert ^2u_{app})+2 L(v \vert u_{app}\vert ^2)  +L(\bar{v}u^2_{app})- Lf\\
& = Q (Lv) + g - Lf,
\end{aligned}
\end{equation*}
where the linear part $Q(Lv)$, respectively the inhomogeneous term $g$ are given by
\begin{equation*}
\begin{aligned}
&Q(Lv)  :=2\vert v\vert^2Lv-v^2\overline{Lv}+2\bar{u}_{app}vLv-u^2_{app}\overline{Lv}+ 
\bar{v}u_{app}Lv -vu_{app}\overline{Lv}+\vert u_{app}\vert^2Lv ,\\
&g  := 2u_{app}\bar{v}Lu_{app}-v^2\overline{Lu}_{app} +\vert v\vert ^2Lu_{app} +v\bar{u}_{app}Lu_{app}-vu_{app}\overline{Lu}_{app}.
\end{aligned}
\end{equation*}
We can use again \eqref{strichartz0}, so it remains to estimate $Q(Lv)$
and $g$ in $L^1_t(T,\infty ; L^2_x)$.  For $u_{app}$ we make use only of
the pointwise bound  $\Vert u_{app}\Vert_{L^{\infty}} \lesssim M t^{-\frac12}$ and the
$L_x^2$ bound for $Lu_{app}$
\[
\Vert Lu_{app}\Vert_{L_x^2}\lesssim M(1+M^2\log t),
\]
while for $v$ we use the $X$ norm bound \eqref{vbound}.
The same type of analysis as in the proof of 
\eqref{m1}-\eqref{m3} leads to  the estimate
\[
\| Q(Lv)\|_{L^1_t(T,2T; L^2_x)} \lesssim M^2 T^{-\delta} (1+M^2 \log T)^3 \|Lv\|_{\tilde X},
\]
where the worst term in $Q(Lu)$ is the last one. 
After dyadic summation this yields 
\[
\| Q(Lv)\|_{L^1_t(T,\infty; L_x^2)} \lesssim \delta^{-1} M^2 T^{-\delta} (1+M^2 \log T)^3 \|Lv\|_{\tilde X}.
\]
This is where we need the condition $\delta \gg M^2$ both in order to
have a good dyadic summation, and in order to gain a small Lipschitz constant.
Next we bound $g$ in $L^1_t(T,\infty; L^2_x)$; this is better since we use 
at least one $v$ norm, and we  obtain
\[
\| g \|_{L^1_t(T,\infty; L^2_x)} \lesssim M^3 T^{-\frac14-\delta} (1+M^2 \log T)^3.
\]
The proof of the theorem is concluded, modulo the proof of Lemma~\ref{l:f}, which follows.

\begin{proof}[Proof of Lemma~\ref{l:f}]
We first compute $f$. For that we need the time derivative of $\mathcal{W}$,
\[
\partial_t \mathcal{W}(t,v) = t^{-1} W_{t^\frac12}( v),
\]
where $W_{t^\frac12}$ is obtained from $W$ via a zero order multiplier
which is localized exactly at dyadic frequency $t^{\frac12}$. Then we
can write
\begin{equation*}
\begin{aligned}
f  = \frac{1}{t^\frac12} e^{\frac{ix^2}{4t}} e^{i \log t |\mathcal{W} |^2}
 & \left\lbrace   \frac{1}{t} \left[  W_{t^\frac12} + 2i \mathcal{W} \log t \Re  (W_{t^\frac12}\bar {\mathcal{W}})\right]  \right.
\\ & + \frac{1}{t^2} \left[  \mathcal{W}''  +2 i \mathcal{W} \log t \Re  (\mathcal {W''} \bar {\mathcal{W}})- 4 \mathcal{W} \left( \log t \Re  (\mathcal{W}' \bar {\mathcal{W}})\right) ^2\right] 
\\ & \left. + \frac{1}{t^2} \left[ 2i \mathcal {W}' \log t \Re  (\mathcal {W}' \bar {\mathcal{W}})   + 2i \mathcal{W} \log t |\mathcal {W}'|^2\right] \right\rbrace ,
\end{aligned}
\end{equation*}
where $\mathcal{W}'$ and $\mathcal{W}''$ denote the first and the
second derivative with respect to $v$.  The expression for $Lf$ is
computed from this using the observation that
$L(e^{\frac{ix^2}{2t}}g(x/t))=ie^{\frac{ix^2}{2t}}\partial_{v}g(x/t)$.
From \eqref{W-bound} we have the $L^{\infty}$ and $L^2_v$ and bounds
\begin{equation}
\label{baiatul}
\begin{aligned}
& \Vert\mathcal{W} \Vert_{L^\infty}\lesssim M, \quad   \Vert \mathcal{W}' \Vert_{L^\infty}
\lesssim M t^{\frac14 - \delta},
\\
\quad  \Vert \mathcal{W}' \Vert_{L^2_v}\lesssim M,
 & \quad  \quad \Vert \mathcal{W}''\Vert_{L^2_v}\lesssim Mt^{\frac{1}{2}-\delta}, \quad  \quad \Vert \mathcal{W}'''\Vert_{L^2_v}\lesssim Mt^{1-\delta},
\\
 &\Vert W_{t^{\frac{1}{2}}}\Vert_{L^2_v}\lesssim Mt^{-\frac{1}{2}-\delta}, \quad \Vert W'_{t^{\frac{1}{2}}}\Vert_{L^2_v}\lesssim M t^{-\delta}.
  \end{aligned}
\end{equation}
Using these bounds it is easy to see that the following estimates hold 
\begin{equation}
\label{L1}
\|f\|_{L_x^2} \lesssim M t^{-\frac32-\delta} ( 1+ M^2 \log t)^2,
 \quad \|Lf\|_{L_x^2} \lesssim  M t^{-1-\delta}( 1+ M^2 \log t)^3.
\end{equation}
Then the  bound for $\Phi f$ in \eqref{f-bd} follows easily by time integration
and \eqref{strichartz0}.  Unfortunately, a  direct integration in the bound for $Lf$ in \eqref{L1}
yields an extra $\delta^{-1}$ factor,
\[
\| Lf \|_{L^1_t(T,\infty; L_x^2)} \lesssim \delta^{-1} M t^{-\delta} (1+M^2 \log t)^3 ,
\]
so the  bound for $\Phi Lf$ in \eqref{f-bd} cannot be obtained directly.

To improve on this, we first peel off the better part of $L f$, which
includes all terms which do not contain either of the factors
$W_{t^{\frac12}}$,  $\mathcal{W}''$.  Precisely, we set
\[
h := 
\frac{1}{t^\frac32} e^{\frac{ix^2}{2t}} \partial_vZ,
\]
where the expression of $Z$ is given by
\[
Z(t,v)=  e^{i \log t |\mathcal{W} |^2} \left( W_{t^{\frac{1}{2}}}+ 2i \mathcal{W} \log t \Re  (W_{t^\frac12}\bar {\mathcal{W}})+\frac{1}{t}\mathcal{W}'' +2i\frac{1}{t}\mathcal{W}\log t \Re (\mathcal{W}''\bar{\mathcal{W}})\right)   .
\]
The function $Z$ is essentially localized around frequency
$t^{\frac12}$;  this is seen in the estimates below for $Z$, which are computed in terms of the
$L^2_v$-norm of $W_{\leq t^{\frac12}}$:
\begin{equation}\label{Zest}
 \Vert \partial^j_v Z\Vert_{L^2_v}\lesssim t^{-1+\frac{j}{2}}(1+M^2\log t)^{j+1}\Vert W_{\leq t^{\frac12}}''\Vert_{L^2_v}, \quad j=0,1,2.
\end{equation}
Since the regularity of $W$ is $H^{1+\delta}$, this shows that the map
from $W$ to $Z$ is mostly diagonal with respect to frequencies, with
rapidly decaying off-diagonal tails.

The difference $Lf-h$ can be shown to have better time decay, 
\[
\| Lf - h\|_{L_x^2} \lesssim M^3 t^{-\frac54-\delta} (1+M^2 \log t)^3 ,
\]
which is stronger than needed. It remains to consider the output of
$h$, for which it is no longer enough to obtain a fixed time $L^2$
bound and then integrate it in time.  Instead, we consider $\Phi h$
directly.

To estimate $\Phi h$  we first compute the Fourier transform of $h$,
\[
\hat h(\xi) = \frac{1}{t^\frac32} \int  e^{-ix\xi}   e^{\frac{ix^2}{2t}} \partial_vZ(t,x/t)\, dx
=  \frac{1}{t^\frac12}  e^{\frac{it \xi^2}{2}} \int     e^{\frac{it (\xi-v)^2}{2}} \partial_v Z(t,v) \, dv.
\]
Interpreting the last integral as a convolution, we compute its
pullback to time zero, 
\[
(e^{\frac{ it \partial_x^2}{2}} h)(t,x) =t^{-1} e^{\frac{i x^2}{2t}} \widehat{(\partial_vZ)}(t,x)  = t^{-1} e^{\frac{i x^2}{2t}} ix\hat Z(t,x),
\]
which, in view of \eqref{Zest},  is mainly concentrated in the dyadic region $x \approx t^\frac12$.
Then the  solution to the backward Schr\"odinger equation is 
\[
\Phi h(t) = e^{-\frac{it\partial_x^2}{2}} z(t), \qquad 
z(t,x) = ix \int_t^{\infty}  s^{-1} e^{\frac{i x^2}{2s}} \hat Z(s,x) \,ds
\]
Now we take advantage of the fact that, in the above integral,
dyadic regions in $t$ essentially contribute to different dyadic regions in $x$. This shows that
\[
\| t^{-1} e^{\frac{i x^2}{2s}} x\hat Z(s,x) \|_{l^2 L^1_t(T,\infty; L^2_x)} \lesssim \| W'_{\geq T^\frac12} \|_{L^2_v} +T^{-\frac12 }\Vert W''_{\leq T^{\frac12}}\Vert_{L^2_v}
\lesssim M T^{-\delta} (1+M^2 \log T)^3,
\] 
where the $l^2$ norm is taken with respect to dyadic regions in frequency.
After time integration this implies that 
\[
\| z(t)\|_{l^2 \dot W^{1,1}(T,\infty; L^2_x)} \lesssim M T^{-\delta}  (1+M^2 \log T).
\]
Here we cannot interchange the $l^2$ and the $\dot W^{1,1}$
norm. However, we can do it if we relax $\dot W^{1,1}$ to the space
$V^2$ of functions with bounded $2$ variation,
\[
l^2 \dot W^{1,1}(T,\infty; L^2_x) \subset l^2 V^2(T,\infty; L^2_x) \subset
 V^2(T,\infty; l^2 L^2_x) =  V^2(T,\infty; L^2_x).
\]
Thus we obtain
\[
\| z(t)\|_{V^2(T,\infty; L^2_x)} \lesssim M T^{-\delta}  (1+M^2 \log T)^3.
\]

Then the desired conclusion 
\[
\| \Phi h(t)\|_{L^\infty(0,T;L^2)} + \| \Phi h(t)\|_{L^4(0,T;L^\infty)} 
 \lesssim M t^{-\delta}  (1+M^2 \log t)^3
\]
follows in view of the Strichartz embeddings 
for $V^2$ spaces,
\[
\| e^{-\frac{it \partial_x^2}{2}} z(t)\|_{L^\infty L^2} + \| \Phi h(t)\|_{L^4 L^\infty}
\lesssim \| z\|_{V^2 L^2},
\]
 see Section 4 in \cite{KochT}.

\end{proof}

\end{document}